\theoremstyle{definition} 
\newtheorem{thm}{T}
\numberwithin{thm}{section}  
\numberwithin{equation}{section}
\newtheorem{definition}[thm]{Definition}
\newtheorem{theorem}[thm]{Theorem}
\newtheorem{lemma}[thm]{Lemma}
\newtheorem{corollary}[thm]{Corollary}
\newtheorem{remark}[thm]{Remark}
\newtheorem{example}[thm]{Example}
\newcommand{\NN}{\mathbb{N}}
\newcommand{\Cst}{\mathrm{C}^*}
\newcommand{\Uu}{\mathcal{U}}
\begin{document}

\title{Equivalence of definitions of AF groupoid}

\author{Lisa Orloff Clark}
\author{Astrid an Huef} 
\author{Rafael P. Lima}
\author{Camila F. Sehnem}

\address[L.O.\ Clark, A. an\ Huef, and R.P.\ Lima]{School of Mathematics and Statistics, Victoria University of Wellington, PO Box 600, Wellington 6140, NEW ZEALAND}
\email{{lisa.clark@vuw.ac.nz}, {astrid.anhuef@vuw.ac.nz},
{rafael.lima@vuw.ac.nz}}

\address[C.F.\ Sehnem]{Department of Pure Mathematics, University of Waterloo,
200 University Avenue West,
Waterloo, Ontario N2L 3G1, CANADA}
\email{{camila.sehnem@uwaterloo.ca}}

\subjclass[2020]{22A22(primary), 46L05 (secondary)}
\keywords{elementary groupoid, AF groupoid}

\thanks{This research was funded by the Marsden Fund of the Royal Society of New Zealand (grant number 18-VUW-056)}

\begin{abstract}
We prove the equivalence of two definitions of AF groupoid in the literature: one by Renault and the other by Farsi, Kumjian, Pask and Sims. In both definitions, an AF groupoid is an increasing union of more basic groupoids, called elementary groupoids. Surprisingly, the two definitions of elementary groupoid are not equivalent; they  coincide if and only if the local homeomorphism that characterises them is a covering map.
\end{abstract}

\maketitle

\section{Introduction}

An approximately finite-dimensional $\Cst$-algebra, or AF algebra, is a $\Cst$-algebra that 
is a completion of an increasing union of finite-dimensional $\Cst$-algebras.  Elliot's famous classification theorem \cite[Theorem 4.3]{ELLIOTT} implies that AF algebras are completely classified by their K-theory. Given how well-behaved these $\Cst$-algebras are, it can be surprisingly difficult to decide if a particular $\Cst$-algebra is AF.  Kumjian, Pask and Raeburn \cite{KPR} show that the $\Cst$-algebra of a row-finite graph is AF if and only if the graph has no loops. However, determining graph conditions that ensure the $\Cst$-algebra of a higher-rank graph is AF is still an open problem, see \cite{ES2012}.

All AF algebras are $\Cst$-algebras of groupoids: given an AF algebra $A$, Renault shows in \cite[Proposition~III.1.15]{Renault} that there exists a particularly nice groupoid $G$ such that $A$ is isomorphic to the $\Cst$-algebra $\Cst(G)$ of $G$; he calls this groupoid an ``AF-groupoid''.
For Renault, $G$ is an increasing union of ``elementary groupoids'' which, locally, are Cartesian products of a totally  disconnected space with a countable transitive principal groupoid.  We note that it is possible for a non-AF groupoid to give rise to an AF $\Cst$-algebra, see for example \cite{MS2022}.

Over time, variations of this notion of ``AF groupoid'' began appearing in the literature. 
In 2003 \cite{Renault-AF}, Renault introduced AF equivalence relations and in 2004, Giordano, Putnam and Skau \cite{GPS} gave a different but equivalent definition. In 2012, Matui \cite{Matui} gave another definition of AF groupoid, which required the unit space to be compact. In 2019, Farsi, Kumjian, Pask and Sims \cite{FKPS} generalised this to groupoids with not necessarily compact unit spaces.  

In this paper, we focus on two notions of AF groupoid: Renault's original one and Farsi et al's.
In both, an AF groupoid is defined to be an increasing union of smaller groupoids with a more basic structure.  In each instance, these smaller groupoids are called ``elementary''.  We show that the two notions of ``elementary groupoid'' are not equivalent;   despite this,
unexpectedly, we show that the two corresponding definitions of AF groupoid are equivalent.

This paper is organized as follows. We begin with a section on preliminaries, and in Section~\ref{section:elementary} we give the details required in the definitions of elementary and AF groupoids both in the sense of Renault and of Farsi et al.
In Section \ref{section:nequivalence}, we compare the two notions of elementary groupoid. We prove in Theorem~\ref{thm:elementary2} that the definition of elementary groupoid of Farsi et al's  is more general than Renault's. Moreover, in Example~\ref{example:Cantorelementary2} we present a  groupoid that is elementary with respect to Farsi et al's definition but not Renault's.

In Theorem~\ref{thm:AFgpdequiv} we show that Farsi et al's and Renault's definitions of AF groupoid are equivalent. That every AF groupoid in Renault's definition is AF in Farsi et al's definition follows because every elementary groupoid in the sense of Renault is also an elementary groupoid according to Farsi et al. The other direction is more difficult: there we show that any sequence of elementary groupoids as in Farsi et al's definition can be written as an increasing union of a sequence of groupoids which are elementary in the sense of Renault. We finish this paper by showing in Corollary~\ref{cor:AF-rel-groupoid} that these notions of AF groupoid also coincide with the notion of AF-equivalence relation by Giordano, Putnam and Skau.


\section{Preliminaries}
\label{section:prelims}

Let $X$ and $Y$ be second-countable, locally compact and Hausdorff spaces, and let $\sigma:X\to Y$ be a continuous map. We say that $\sigma$ is a \emph{local homeomorphism} if for every $x\in X$ there exists an open neighbourhood $\Uu$ of $X$ such that the restriction $\sigma\vert_\Uu$ of $\sigma$ to $\Uu$ is injective and $\sigma(\Uu)$ is open in $Y$. 
We say that $\sigma$ is a \emph{covering map} if it is surjective, and for every $y\in Y$ there exist an open neighbourhood $W$ of $y$, $N\in\NN\setminus\{0\}\cup\{\infty\}$ and $N$ disjoint open subsets  $\Uu_1, \Uu_2, \dots$  of $X$ such that $\sigma^{-1}(W)=\bigcup_{i=1}^N \Uu_i$ and  $\sigma\vert_{\Uu_i}$  is a homeomorphism of $\Uu_i$ onto $W$ for all $i$. Note that every covering map is a local homeomorphism.
We will use the following:

\begin{lemma}(See also \cite[Theorem~ 4.10]{FKPS})\label{lem:section}
Let $X$ and $Y$ be second-countable, locally compact, Hausdorff, totally disconnected spaces, and let $\sigma:X\to Y$ be a surjective local  homeomorphism.  Then there exists a section $\varphi:Y\to X$ for $\sigma$ that is a local homeomorphism.
\end{lemma}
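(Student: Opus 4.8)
The plan is to cover $X$ by countably many compact open sets on which $\sigma$ restricts to a homeomorphism, push these forward to a clopen cover of $Y$, disjointify that cover, and define $\varphi$ on each disjoint piece as the corresponding local inverse of $\sigma$. Total disconnectedness is what makes the gluing succeed: it lets us decompose $Y$ into clopen pieces, and a map that is continuous on each member of a clopen (hence open) cover is continuous.

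First I would use that $\sigma$ is a local homeomorphism together with the fact that $X$ is locally compact, Hausdorff and totally disconnected to produce, for each $x\in X$, a compact open neighbourhood $U_x$ of $x$ such that $\sigma|_{U_x}$ is a homeomorphism onto an open subset of $Y$. Indeed, given a local-homeomorphism neighbourhood of $x$, one shrinks it to a compact open one using that the compact open sets form a basis for the topology of $X$; restricting the homeomorphism $\sigma|_U$ to the open subset $U_x$ then gives a homeomorphism onto the open set $\sigma(U_x)$. Since $X$ is second-countable and hence Lindel\"of, the cover $\{U_x\}_{x\in X}$ admits a countable subcover $\{U_n\}_{n\geq 1}$. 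Set $V_n:=\sigma(U_n)$. As $U_n$ is compact and $\sigma|_{U_n}$ is a homeomorphism, $V_n$ is compact; it is also open, and since compact subsets of a Hausdorff space are closed, each $V_n$ is clopen in $Y$. Surjectivity of $\sigma$ gives $\bigcup_n V_n=\sigma\bigl(\bigcup_n U_n\bigr)=\sigma(X)=Y$.

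Next I would disjointify: put $W_1:=V_1$ and $W_n:=V_n\setminus(V_1\cup\cdots\cup V_{n-1})$ for $n\geq 2$. Because the $V_i$ are clopen, each $W_n$ is clopen, the $W_n$ are pairwise disjoint, and $\bigcup_n W_n=\bigcup_n V_n=Y$. For $y\in W_n$ define $\varphi(y):=(\sigma|_{U_n})^{-1}(y)$; this is unambiguous since the $W_n$ partition $Y$. By construction $\sigma(\varphi(y))=y$ for every $y\in Y$, so $\varphi$ is a section of $\sigma$.

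Finally I would verify that $\varphi$ is a local homeomorphism. On each $W_n$ the map $\varphi$ agrees with the restriction of the homeomorphism $(\sigma|_{U_n})^{-1}\colon V_n\to U_n$, hence is continuous on $W_n$ and maps $W_n$ homeomorphically onto $(\sigma|_{U_n})^{-1}(W_n)$, which is open in $U_n$ and therefore open in $X$. Since the $W_n$ form an open cover of $Y$ and $\varphi$ is continuous on each, $\varphi$ is continuous on $Y$; and since every point of $Y$ lies in some $W_n$ on which $\varphi$ is a homeomorphism onto an open subset of $X$, $\varphi$ is a local homeomorphism. The only point requiring real care is the continuity and local-homeomorphism gluing across the pieces $W_n$, and this is exactly where total disconnectedness enters, via the clopenness of the $V_n$ and hence of the $W_n$.
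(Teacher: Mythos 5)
Your proof is correct and follows essentially the same strategy as the paper's: cover $X$ by countably many compact open sets on which $\sigma$ is injective, disjointify, and define $\varphi$ piecewise as the local inverses, with clopenness of the pieces giving continuity. The only cosmetic difference is that you disjointify downstairs in $Y$ (via $W_n=V_n\setminus(V_1\cup\cdots\cup V_{n-1})$) while the paper disjointifies upstairs in $X$ (via $V_{i+1}=\mathcal{U}_{i+1}\setminus\bigcup_{j\leq i}\sigma^{-1}(\sigma(\mathcal{U}_j))$), but these yield the same partition of $Y$ and essentially the same section.
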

\begin{proof}
    Since $\sigma$ is a local homeomorphism and $X$ is second-countable, we can cover $X$ with a countable family of open sets $\mathcal{U}_1, \mathcal{U}_2, \dots$ such that the restriction $\sigma\vert_{\mathcal{U}_i}: \mathcal{U}_i \rightarrow \sigma(\mathcal{U}_i)$ is a homeomorphism for all $\mathcal{U}_i$. Without loss of generality, we assume that these sets are compact open and disjoint because $X$ is locally compact and totally disconnected. We let $M \in \NN \cup \lbrace \infty \rbrace$ be the number of sets $\mathcal{U}_i$.

    Let $V_1 = \mathcal{U}_1$, and define for each $i\geq 1$
    \[
        V_{i+1} = \mathcal{U}_{i+1} \setminus \bigcup_{j=1}^{i} \sigma^{-1}(\sigma(\mathcal{U}_j)).
    \]
    Note that the sets $V_i$s are compact open, disjoint, and cover $X$. By rearranging the sets $V_i$ if necessary, let $N \leq M$ be such that
    \[
        Y = \bigcup_{i=1}^N \sigma(V_i),
    \]
    where each $V_i \neq \emptyset$. Define the map $\varphi: Y \rightarrow X$ by $\varphi\vert_{\sigma(V_i)} = \sigma\vert_{V_i}^{-1}$ for all $i < N + 1$.  
    Then $\varphi$ is a local homeomorphism, with image
    \begin{align*}
        \varphi(Y)
        = \bigcup_{i=1}^N \varphi(\sigma(V_i))
        = \bigcup_{i=1}^N \sigma\vert_{V_i}^{-1}(\sigma(V_i)) 
        = \bigcup_{i=1}^N V_i 
    \end{align*}
     Let $y \in Y$. Then there exists a unique $i$ such that $\varphi(y) \in V_i$. this implies that $\sigma(\varphi(y)) = \sigma(\sigma\vert_{V_i}^{-1}(y)) = y$. Therefore $\varphi$ is a section of $\sigma$.
    %
\end{proof}


For basic details about topological groupoids, see \cite{Williams-groupoids}.
Let $\sigma:X\to Y$ be a continuous and surjective map. Then the equivalence relation
\[
R(\sigma)\coloneqq \{(x_1, x_2)\in X\times X : \sigma(x_1)=\sigma(x_2)\},
\]
with the subspace topology from $X\times X$, is a second-countable, locally compact, Hausdorff,  principal groupoid. The range and source maps $r,s\colon R(\sigma)\to X$ are given by $r(x_1,x_2)=x_1$ and $s(x_1,x_2)=x_2$, where we have identified the unit space of $R(\sigma)$ with $X$; the groupoid operations on $R(\sigma)$ are given by $(x_1,x_2)(x_2,x_3)=(x_1,x_3)$ and $(x_1,x_2)^{-1}=(x_2,x_1)$.
By \cite[Lemma~4.2]{CaHR-Fell}, the groupoid $R(\sigma)$ is \'etale if and only if  $\sigma$ is a local homeomorphism.

Let $G$ and $H$ be groupoids. Then the \emph{product groupoid} $G\times H$ has composable pairs $(G\times H)^{(2)}=G^{(2)}\times H^{(2)}$, and  product and inverse given by 
\[
(g, h)(g', h) = (g g', h h')
\quad\text{and}\quad
(g,h)^{-1} = (g^{-1}, h^{-1})
\]
for $(g,g')\in G^{(2)}$ and $(h,h')\in H^{(2)}$.
If $G, H$ are topological groupoids, then $G\times H$ with the  product topology is again a topological groupoid. 

The \emph{disjoint union groupoid} $G \sqcup H$ of $G$ and $H$
is the set-theoretical disjoint union $G \sqcup H= (G \times \lbrace 1 \rbrace) \cup (H \times \lbrace 2 \rbrace)$  with composable pairs $(G \sqcup H)^{(2)}=(G^{(2)}\times \{1\})\cup (H^{(2)}\times\{2\})$, and product and inverse given by, for $i=1,2$,
\[(g,i)(h,i) = (gh,i) \quad\text{and}\quad 
(g,i)^{-1} = (g^{-1}, i) 
\]
If $G^{(0)} \cap H^{(0)} = \emptyset$, then we may  identify $G \sqcup H$ with $G \cup H$.
 If $G$, $H$ are topological groupoids, then  $G \sqcup H$ equipped with the  the disjoint union topology is again a topological groupoid. 

\subsection*{Notation}
For $n\in\NN\setminus\{0\}$, we write  $[n] = \lbrace 1, \hdots, n \rbrace$. We also use  $[\infty] = \lbrace 1, 2, \hdots \rbrace$.

\section{Elementary and AF groupoids}
\label{section:elementary}

We start by giving the definitions of `elementary' and `AF' groupoid from \cite{Renault} and \cite{FKPS}; here is Renault's \cite[Definition~III.1.1]{Renault}.

\begin{definition}\label{def:Renault}  (Renault)
Let $n \in \NN\setminus\{0\}\cup \{\infty\}$. A groupoid $G$ is  \emph{elementary  of type} $n$ if it is isomorphic to the product of a second-countable, locally compact, Hausdorff space and a transitive principal groupoid on a set of $n$ elements. 
A groupoid $G$ is  \emph{Renault-elementary} if it is the disjoint union of a sequence of elementary groupoids $G_i$ of type $n_i$.
A groupoid $G$ is  \emph{approximately finite} (\emph{AF}) if its unit space is totally disconnected and there is an increasing sequence of open elementary subgroupoids $G_n$, with the same unit space, such that $G = \bigcup_{n=1}^\infty G_n$.
\end{definition}

In our notation, 
 an elementary groupoid of type $n\in\NN\setminus\{0\}\cup\{\infty\}$ is isomorphic to $X \times [n]^2$ for some $X$. Then every Renault-elementary groupoid is isomorphic to a disjoint-union groupoid  $\bigsqcup_i X_i \times [n_i]^2$.
 We also identify the unit space $(X \times [n]^2)^{(0)}$ with $X \times [n]$. 
The following definitions are from \cite[Definition~4.9]{FKPS}.

\begin{definition}\label{def:FKPS} (Farsi--Kumjian--Pask--Sims)
Let $G$ be a  locally compact, Hausdorff \'etale groupoid with totally disconnected unit space. Then $G$ is \emph{FKPS-elementary} if there exists a
surjective local homeomorphism $\sigma: X \rightarrow Y$ between second-countable, locally compact, Hausdorff, totally disconnected spaces $X$ and $Y$ such that $G$ is isomorphic to  
\[
R(\sigma)= \{(x_1, x_2)\in X\times X : \sigma(x_1)=\sigma(x_2)\}.
\]
A groupoid is \emph{AF} if it is the increasing union of open FKPS-elementary subgroupoids with the same unit space
\end{definition}

\begin{remark} We have a few comments concerning Definitions \ref{def:Renault} and \ref{def:FKPS}.
\begin{enumerate}
    \item[(i)] Since our main goal is to compare different definitions of AF groupoids in the literature, we will consider in our main results Renault-elementary groupoids with totally disconnected unit spaces.

        \item[(ii)] We believe that in \cite[Definition 4.9]{FKPS} the authors accidentally omitted in the definition of an AF groupoid that the union should be an \emph{increasing} union of groupoids with \emph{the same unit space}. With these additions, \cite[Definition 4.9]{FKPS}  is indeed a generalisation of Matui's \cite[Definition~2.2]{Matui} to groupoids with non-compact unit spaces, as is claimed. We have included these modifications in Definition~\ref{def:FKPS}. Notice also that those additions are used in the main results of \cite{FKPS}.

        
\end{enumerate}
    \end{remark}

We also recall the definition of AF-equivalence relation by Giordano, Putnam and Skau \cite[Definitions 3.1 and 3.7]{GPS}. We show in Corollary~\ref{cor:AF-rel-groupoid} that this is equivalent to the above notions of AF groupoid.

\begin{definition}(Giordano, Putnam, Skau) Let $R$ be an equivalence relation on $X$ equipped with a topology that makes it an \'etale groupoid. We say that $R$ is a \emph{compact \'etale equivalence relation} (CEER) if $R \setminus X$ is a compact subset of $R$. We say that $R$ is an \emph{AF-equivalence relation} if $R^{(0)}$ is locally compact, Hausdorff, second countable, totally disconnected, and if $R$ is the inductive limit of a sequence of CEERs.
\end{definition}

\section{Comparing notions of AF groupoid}\label{section:nequivalence}
Our first aim is to show that if $G$ is FKPS-elementary, then it is Renault-elementary if and only if the associated surjective local homeomorphism is a covering map.

\begin{lemma}\label{lem:elementary}
    Let $G$ be a  Renault-elementary groupoid with totally disconnected unit space. Then there exist a second-countable, locally compact, Hausdorff  space $Y$ and a covering map $\sigma: G^{(0)} \rightarrow Y$ such that $G$ is isomorphic to $R(\sigma)$.
\end{lemma}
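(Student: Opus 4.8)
The plan is to read off an explicit covering map directly from the structural form of a Renault-elementary groupoid. By Definition~\ref{def:Renault} and the discussion immediately following it, $G$ is isomorphic to a disjoint-union groupoid $\bigsqcup_i X_i \times [n_i]^2$, where $i$ ranges over a finite or countably infinite set, each $X_i$ is second-countable, locally compact and Hausdorff, and $n_i \in \NN\setminus\{0\}\cup\{\infty\}$. Since $G^{(0)}$ is totally disconnected and is identified with $\bigsqcup_i X_i \times [n_i]$, each subspace $X_i \times \{k\} \cong X_i$ is totally disconnected; in particular every $X_i$ is totally disconnected. Fixing such an isomorphism, I would identify $G$ with $\bigsqcup_i X_i \times [n_i]^2$ and $G^{(0)}$ with $\bigsqcup_i X_i \times [n_i]$ throughout.

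Next I would set $Y \coloneqq \bigsqcup_i X_i$, equipped with the disjoint-union topology, which is again second-countable, locally compact and Hausdorff, and define $\sigma \colon G^{(0)} \to Y$ on the $i$-th summand by $\sigma(x, k) = x$, i.e.\ $\sigma$ forgets the $[n_i]$-coordinate. This map is continuous and, since each $n_i \geq 1$, surjective. To see that it is a covering map, fix $y \in Y$, say $y \in X_i$, and choose an open neighbourhood $W \subseteq X_i$ of $y$; then $\sigma^{-1}(W) = \bigcup_{k=1}^{n_i} W \times \{k\}$ is a disjoint union of $n_i$ open sets, and $\sigma$ restricts to a homeomorphism of each $W \times \{k\}$ onto $W$. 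This verifies the covering-map condition with $N = n_i$ sheets (permitting $N = \infty$); note that the number of sheets is locally constant but may differ between summands.

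Finally I would produce the isomorphism $G \cong R(\sigma)$. By construction $\sigma(x, k) = \sigma(x', k')$ holds exactly when $(x, k)$ and $(x', k')$ lie in a common summand $X_i \times [n_i]$ with $x = x'$, so the map $\Phi$ sending the arrow $(x, (k, k'))$ in the $i$-th summand of $G$ to the pair $((x, k), (x, k')) \in R(\sigma)$ is a well-defined bijection onto $R(\sigma)$. Checking that $\Phi$ respects range, source, inverse and the products $(x,(k,k'))(x,(k',k'')) = (x,(k,k''))$ is routine, and $\Phi$ is a homeomorphism since on each summand it is the obvious product homeomorphism and the topologies on both sides are assembled compatibly from the summands. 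The construction contains no serious obstacle: the covering structure is forced by the product shape $X_i \times [n_i]^2$ of each summand, and the only care required is the bookkeeping of the disjoint-union topology, ensuring that $\sigma$ is globally a covering map and that $\Phi$ is simultaneously a groupoid isomorphism and a homeomorphism.
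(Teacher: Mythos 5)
Your proposal is correct and follows essentially the same route as the paper's own proof: both decompose $G$ as $\bigsqcup_i X_i \times [n_i]^2$, take $Y$ to be the (made-disjoint) union of the $X_i$, define $\sigma(x,k)=x$, verify the covering condition over each summand $X_i$, and exhibit the isomorphism $(x,(k,k'))\mapsto ((x,k),(x,k'))$. The only cosmetic difference is that the paper forces disjointness by replacing $X_i$ with $X_i\times\{i\}$ and takes $W=X_i$ itself as the evenly covered neighbourhood, whereas you use the disjoint-union topology on $Y$ and an arbitrary open $W\subseteq X_i$; these are equivalent.
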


\begin{proof}
    Since $G$ is Renault-elementary, there  exist $N\in\NN\cup\{\infty\}$ and  $N$ second-countable, locally compact, Hausdorff, spaces $X_1, X_2, \dots$, and  $N$ elements $n_1, n_2, \dots \in\NN\cup\{\infty\}$ such that
\begin{equation}\label{G elementary}
G \cong \bigsqcup_{i=1}^N X_i \times [n_i]^2.
\end{equation}
For the rest of the proof we assume that $G$ equals the right-hand-side of~\eqref{G elementary}, and hence that $G^{(0)}$ equals $\bigsqcup_{i=1}^N X_i \times [n_i]$,
where we identify $[n_i]$ with the diagonal $\{(j,j): 1\leq j\leq n_i\}$ in $[n_i]^2$.
Further, by replacing $X_i$ with $X_i \times \lbrace i \rbrace$ if necessary, we may assume that the $X_i$ are already disjoint. 
Also, since $G^{(0)}$ is totally disconnected, the $X_i$ are as well.

We set $Y\coloneqq\bigcup_{i=1}^N X_i$ and 
let $\sigma: G^{(0)} \rightarrow Y$ be given by $\sigma(x,j) = x$. 
Then $\sigma$ is surjective and continuous. To see that $\sigma$ is  a covering map,  fix $y\in Y$. Then $y\in X_k$ for a unique $k$, and $X_k$ is open in $Y$. Then 
\[
\sigma^{-1}(X_k)
= X_k \times [n_k]
= \bigcup_{j=1}^{n_k} X_k \times \lbrace j \rbrace
\]
is a disjoint union of open subsets $X_k \times \lbrace j \rbrace$ of $G^{(0)}$, and the restriction of $\sigma$ to each $X_k \times \lbrace j \rbrace$ is a homeomorphism onto $X_k$. Thus $\sigma$ is a covering map.  

Finally, a straightforward computation shows that $\mu: G \rightarrow R(\sigma)$ defined by $\mu(x,j,k) = (x,j,x,k)$ is a groupoid homomeorphism.
\end{proof}

The next lemma is a technical tool that we will use to prove Theorem~\ref{thm:elementary2}.


\begin{lemma}
\label{lemma:covmap-sets}
Let $\sigma: X \rightarrow Y$ be a covering map between second-countable, locally compact, Hausdorff and totally disconnected spaces $X$ and  $Y$. Let $\varphi: Y \rightarrow X$ be a section of $\sigma$ that is a local homeomorphism, and let $y \in Y$. Then there exist a compact  open neighbourhood $W$ of $y$,  a positive $n \in \mathbb{N} \cup \lbrace \infty \rbrace$, and  $n$ disjoint compact open subsets $\mathcal{U}_1, \mathcal{U}_2, \dots$ of $X$ such that
\begin{enumerate}[(i)]
\item $\sigma^{-1}(W) = \bigcup_{i=1}^n \mathcal{U}_i$,
\item $\sigma\vert_{\mathcal{U}_i}: \mathcal{U}_i \rightarrow W$ is a homeomorphism for all $i \in \NN$ such that $i\leq n$,
\item\label{it3:lemcovmap} $\varphi(W) = \sigma^{-1}(W) \cap \varphi(Y) = \mathcal{U}_1$.  
\end{enumerate}
\end{lemma}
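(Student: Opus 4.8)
The plan is to feed the point $y$ into the definition of the covering map to obtain an evenly covered neighbourhood, locate the unique sheet containing $\varphi(y)$, and then shrink that neighbourhood---using continuity of $\varphi$ together with the abundance of compact open sets---until the entire portion of $\varphi(Y)$ lying over it is trapped in that one sheet. Conditions (i) and (ii) will fall out of the covering-map structure, while (iii) is where the work lies.

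First I would apply the covering-map property at $y$ to get an open neighbourhood $W_0$ of $y$, an index $N\in\NN\setminus\{0\}\cup\{\infty\}$, and disjoint open sets $\Vv_1,\Vv_2,\dots$ with $\sigma^{-1}(W_0)=\bigcup_{i=1}^N \Vv_i$ and each $\sigma\vert_{\Vv_i}\colon\Vv_i\to W_0$ a homeomorphism. Since $\varphi(y)\in\sigma^{-1}(\{y\})\subseteq\sigma^{-1}(W_0)$, the point $\varphi(y)$ lies in exactly one of these sheets, and after reindexing I may assume $\varphi(y)\in\Vv_1$. Because $\varphi$ is continuous and $\Vv_1$ is open, the set $W_0\cap\varphi^{-1}(\Vv_1)$ is an open neighbourhood of $y$ on which $\varphi$ takes values in $\Vv_1$. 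Using that $Y$ is locally compact, Hausdorff and totally disconnected---so that compact open sets form a neighbourhood basis, exactly as invoked in the proof of Lemma~\ref{lem:section}---I would choose a compact open $W$ with $y\in W\subseteq W_0\cap\varphi^{-1}(\Vv_1)$.

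Setting $\mathcal{U}_i\coloneqq\Vv_i\cap\sigma^{-1}(W)=(\sigma\vert_{\Vv_i})^{-1}(W)$ and $n=N$ then gives disjoint compact open sets (compact because $\sigma\vert_{\Vv_i}$ is a homeomorphism carrying the compact set $W$ back), each nonempty since $\sigma\vert_{\Vv_i}$ maps onto $W_0\supseteq W\neq\emptyset$. Properties (i) and (ii) are immediate: $W\subseteq W_0$ gives $\sigma^{-1}(W)=\bigcup_{i=1}^n\mathcal{U}_i$, and each $\sigma\vert_{\mathcal{U}_i}$ is a homeomorphism onto $W$ as a restriction of $\sigma\vert_{\Vv_i}$.

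The real content is (iii), which I would verify by double inclusion. For $w\in W$ one has $\varphi(w)\in\Vv_1$ and $\sigma(\varphi(w))=w\in W$, so $\varphi(W)\subseteq\Vv_1\cap\sigma^{-1}(W)=\mathcal{U}_1$; conversely, given $u\in\mathcal{U}_1$, the point $\varphi(\sigma(u))$ lies in $\Vv_1$ and shares the image $\sigma(u)$ with $u$, so injectivity of $\sigma\vert_{\Vv_1}$ forces $\varphi(\sigma(u))=u$, yielding $\mathcal{U}_1\subseteq\varphi(W)$. Finally, since $\sigma\circ\varphi=\mathrm{id}_Y$, any $z=\varphi(y')\in\varphi(Y)$ with $\sigma(z)\in W$ satisfies $y'=\sigma(z)\in W$, so $z\in\varphi(W)$, giving $\sigma^{-1}(W)\cap\varphi(Y)=\varphi(W)=\mathcal{U}_1$. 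The one step needing care is the shrinking that confines $\varphi$ to a single sheet: it is precisely where continuity of the section and the compact open basis on $Y$ are both used, and it is what makes (iii) hold rather than only the covering-map conclusions (i) and (ii).
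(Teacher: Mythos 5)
Your proof is correct, but it takes a genuinely different route from the paper's. The paper keeps the evenly covered compact open neighbourhood $W$ fixed and instead repairs the fibre decomposition: it sets $\Uu=\sigma^{-1}(W)\cap\varphi(Y)$, proves directly that $\sigma\vert_{\Uu}$ is a homeomorphism onto $W$ (using openness of $\sigma$, injectivity of $\sigma$ on $\varphi(Y)$, and the section property), and then---since $\Uu$ may be scattered across several of the original sheets $\mathcal{O}_1,\dots,\mathcal{O}_m$---re-partitions $\bigcup_{j=1}^m\mathcal{O}_j$ into new sheets $\mathcal{U}_i$ by a cyclic-shift construction $\mathcal{U}_i=\bigcup_{j=1}^m\sigma\vert_{\mathcal{O}_j}^{-1}\circ\sigma(\mathcal{O}_{(j+i-2\,\mathrm{mod}\,m)+1}\cap\varphi(Y))$, arranged so that $\mathcal{U}_1=\Uu$. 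You avoid this entirely by shrinking the base instead: continuity of $\varphi$ plus the compact open neighbourhood basis of $Y$ lets you choose $W\subseteq W_0\cap\varphi^{-1}(\Vv_1)$, after which the original sheets restrict to the required $\mathcal{U}_i$ and your three verifications of (iii) are routine; in particular you correctly observe that $\sigma^{-1}(W)\cap\varphi(Y)=\varphi(W)$ holds automatically because $\varphi$ is a global section, which is the point a careless reader might miss after confining only $\varphi(W)$ to one sheet. Your argument is shorter, dispenses with the modular-arithmetic bookkeeping, and uses only continuity of $\varphi$ rather than its being a local homeomorphism; the paper's argument proves the slightly stronger fact that the conclusion can be arranged over \emph{any} evenly covered compact open neighbourhood of $y$, without shrinking. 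Since the application in Theorem~\ref{thm:elementary2} only needs existence of some such $W$ for each $y$ (the $\widetilde{W}_y$ are subsequently refined and disjointified anyway), your version fully suffices for the paper's purposes.
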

Note:  We use item~\eqref{it3:lemcovmap} when we build a Renault-elementary groupoid from a covering map, in particular, the $\mathcal{U}_1$ is the locally compact Hausdorff space in the product.

\begin{proof} We write $X_\varphi =\varphi(Y)$ for convenience. In particular, $X_\varphi$ is open.

 Since $\sigma$ is a covering map on the totally disconnected and second-countable space $X$, there exist a compact open neighbourhood $W$ of $y$,  a nonzero $n \in \mathbb{N} \cup \lbrace \infty \rbrace$ and $n$ disjoint compact open $\mathcal{O}_1, \mathcal{O}_2, \dots$ of $X$ with
\begin{align}
\label{eqn:invsigmaWOi}
\sigma^{-1}(W) = \bigcup_{j=1}^n \mathcal{O}_j,
\end{align}
and such that $\sigma\vert_{\mathcal{O}_j}: \mathcal{O}_j \rightarrow W$ is a homeomorphism for all $j$.
Set \[
\Uu\coloneqq \sigma^{-1}(W)\cap X_\varphi.
\]
Notice that $\Uu$ is nonempty because $\varphi(y)\in \Uu$ and that it is open because  $X_\varphi$ is open  and $\sigma$ is continuous. 
We claim that $\sigma\vert_{\Uu}: \Uu \rightarrow W$ is a homeomorphism. In particular, it then follows that $\Uu$ is compact because $W$ is. Since $\sigma$ is open and $\Uu$ is open, $\sigma\vert_{\Uu}$ is open. Since $\varphi$ is a section for $\sigma$, it follows that $\sigma$ is injective on $X_\varphi$, and hence  $\sigma\vert_{\Uu}$ is injective. We have 
\[\sigma(\Uu)=\sigma\big( \sigma^{-1}(W)\cap X_\varphi  \big)
\subseteq \sigma ( \sigma^{-1}(W))\cap\sigma(X_\varphi)
= W \cap Y
= W.
\]
For the reverse inclusion, let $w\in W$. Then $w=\sigma(\varphi(w))$ implies that $\varphi(w)\in\sigma^{-1}(W)\cap X_\varphi$, and hence $w\in\sigma(\Uu)$. Thus $\sigma(\Uu)=W$, and $\sigma\vert_{\Uu}: \Uu \rightarrow W$ is a homeomorphism as claimed. 

Since $\Uu$ is compact, by reordering $\mathcal{O}_1, \mathcal{O}_2, \dots$ if necessary, there exists $m\leq n$ such that, for $i \in [n]$, $\mathcal{O}_i \cap X_\varphi \neq \emptyset$ if and only if $1\leq i \leq m$. In particular, 
\[\Uu = \bigcup_{j=1}^m \mathcal{O}_j \cap X_\varphi.
\]
Let $1\leq j\leq m$. Then
\begin{align*}
\mathcal{O}_j 
&= \sigma\vert_{\mathcal{O}_j}^{-1}(W) 
= \sigma\vert_{\mathcal{O}_j}^{-1}(\sigma(\Uu)) \\
&= \sigma\vert_{\mathcal{O}_j}^{-1} \circ \sigma \left( \bigcup_{l=1}^m \mathcal{O}_l \cap X_\varphi \right) \\
&= \bigcup_{l=1}^m \sigma\vert_{\mathcal{O}_j}^{-1} \circ \sigma( \mathcal{O}_l \cap X_\varphi),
\end{align*}
and this union is disjoint because $\sigma$ is injective on $X_\varphi$
and because the sets $\mathcal{O}_l$ are disjoint.
We then obtain the disjoint union
\begin{align}\label{eq-unionOj}
\bigcup_{j=1}^m \mathcal{O}_j
&= \bigcup_{j,l=1}^m \sigma\vert_{\mathcal{O}_j}^{-1} \circ \sigma(\mathcal{O}_l \cap X_\varphi) \notag
\\
&= \bigcup_{j=1}^m \bigcup_{k=-1}^{m-2} \sigma\vert_{\mathcal{O}_j}^{-1} \circ \sigma(\mathcal{O}_{(j + k \textrm{ mod } m)+ 1} \cap X_\varphi). 
\end{align}
 For $i =1, 2, \dots, m$, define 
\begin{align*}
\mathcal{U}_i = \bigcup_{j=1}^m  \sigma\vert_{\mathcal{O}_j}^{-1} \circ \sigma(\mathcal{O}_{(j + i - 2 \textrm{ mod } m)+ 1} \cap X_\varphi).
\end{align*}
Note that $\mathcal{U}_1=\Uu$ and that $\Uu_i\cap \Uu_j=\emptyset$ unless $i=j$. 
As shown above for $i=1$, each  $\sigma\vert_{\Uu_i}: \Uu_i \rightarrow W$ is a homeomorphism onto $W$.
Moreover, 
\begin{align*}
\bigcup_{i=1}^m \mathcal{U}_i
&=\bigcup_{i=1}^m\bigcup_{j=1}^m
      \sigma\vert_{\mathcal{O}_j}^{-1} \circ \sigma(\mathcal{O}_{(j + i - 2 \textrm{ mod } m)+ 1} \cap X_\varphi)\\
      &=\bigcup_{j=1}^m\bigcup_{l=-1}^{m-2}
      \sigma\vert_{\mathcal{O}_j}^{-1} \circ \sigma(\mathcal{O}_{(j + l \textrm{ mod } m)+ 1} \cap X_\varphi)
      =\bigcup_{j=1}^m \mathcal{O}_j
\end{align*}
using \eqref{eq-unionOj}. 

Finally, if $n > m$,  then  for $i>m$ we define $\mathcal{U}_i = \mathcal{O}_i$. Then $\Uu_1, \Uu_2,\dots$ are disjoint compact open subsets such that properties (i)--(iii) hold.
\end{proof}

\begin{theorem}
\label{thm:elementary2}
Let $\sigma: X \rightarrow Y$ be a surjective local homeomorphism 
between second-countable, locally compact, Hausdorff, totally disconnected spaces. Then $R(\sigma)$ is Renault-elementary if and only if $\sigma$ is a covering map.
\end{theorem}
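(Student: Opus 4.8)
The plan is to prove the two implications separately, and in both directions to reduce matters to the single observation that the projection $\bigsqcup_i X_i \times [n_i] \to \bigsqcup_i X_i$ is the model covering map over a totally disconnected base, whose associated relation is exactly the Renault-elementary groupoid $\bigsqcup_i X_i \times [n_i]^2$. The forward implication is a construction (turning a covering map into a disjoint union of such models), while the converse is a recognition argument (seeing that the given $\sigma$ must be one of these models up to homeomorphism).

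For the direction ``$\sigma$ a covering map $\Rightarrow$ $R(\sigma)$ Renault-elementary'', I would first invoke Lemma~\ref{lem:section} to fix a section $\varphi\colon Y\to X$ of $\sigma$ that is a local homeomorphism. Then, for each $y\in Y$, I would apply Lemma~\ref{lemma:covmap-sets} to obtain a compact open neighbourhood $W_y$ of $y$ together with a trivialisation $\sigma^{-1}(W_y)=\bigsqcup_{i=1}^{n_y}\mathcal{U}_i^{y}$ into disjoint compact open sets, each mapped homeomorphically onto $W_y$ by $\sigma$, with $\mathcal{U}_1^{y}=\varphi(W_y)$. The family $\{W_y\}_{y\in Y}$ covers $Y$; using second countability (hence the Lindel\"of property) I would extract a countable subcover and then disjointify it in the usual way, replacing the $k$-th set by its difference with the union of the earlier ones. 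This keeps the sets compact open, since they are compact open and $Y$ is Hausdorff, and produces a countable partition of $Y$ into nonempty compact open sets $W_1,W_2,\dots$. Restricting each trivialisation to the corresponding $W_k$ gives $\sigma^{-1}(W_k)=\bigsqcup_{i=1}^{n_k}(\mathcal{U}_i^{k}\cap\sigma^{-1}(W_k))$, still a disjoint union of compact open sets each carried homeomorphically onto $W_k$.

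The endgame is then bookkeeping. Because the $W_k$ are disjoint and cover $Y$, the sets $\sigma^{-1}(W_k)$ are disjoint clopen sets covering $X$, and no element of $R(\sigma)$ joins distinct blocks, so $R(\sigma)=\bigsqcup_k R(\sigma)|_{\sigma^{-1}(W_k)}$ as topological groupoids. On each block, the trivialisation identifies $R(\sigma)|_{\sigma^{-1}(W_k)}$ with $W_k\times[n_k]^2$ via $(x_1,x_2)\mapsto(\sigma(x_1),i_1,i_2)$, where $x_a\in\mathcal{U}_{i_a}^{k}$; equivalently one may take $\mathcal{U}_1^{k}=\varphi(W_k)$ as the space factor, which is the role of item~\eqref{it3:lemcovmap}. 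Assembling the blocks yields $R(\sigma)\cong\bigsqcup_k W_k\times[n_k]^2$, precisely the form of a Renault-elementary groupoid. The only point needing attention is the case $n_k=\infty$, which is permitted throughout and causes no trouble.

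For the converse, suppose $R(\sigma)$ is Renault-elementary; since its unit space $X$ is totally disconnected, Lemma~\ref{lem:elementary} provides a covering map $\tau\colon X\to Z$ and a groupoid isomorphism $R(\sigma)\cong R(\tau)$. Here I expect the main obstacle: this isomorphism need not fix the unit space, so it only restricts to a homeomorphism $\alpha\colon X\to X$ with $\sigma(x_1)=\sigma(x_2)\iff\tau(\alpha(x_1))=\tau(\alpha(x_2))$. Thus $\sigma$ and the covering map $\tau\circ\alpha$ determine the \emph{same} equivalence relation on $X$, that is, they have the same fibres. Both are continuous open surjections, hence quotient maps, and having identical fibres they differ only by a homeomorphism $\beta\colon Y\to Z$ of their codomains with $\beta\circ\sigma=\tau\circ\alpha$. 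Consequently $\sigma=\beta^{-1}\circ\tau\circ\alpha$ is a composite of a homeomorphism, a covering map, and a homeomorphism, and so is itself a covering map. The detail to check carefully is that $\sigma$ is genuinely a quotient map onto $Y$ with its given topology, which holds because $\sigma$ is a surjective local homeomorphism, hence a continuous open surjection.
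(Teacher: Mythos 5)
Your proposal is correct and follows essentially the same route as the paper's proof: the same ingredients (Lemma~\ref{lem:section} for the section, Lemma~\ref{lemma:covmap-sets} for the local trivialisations, a countable subcover disjointified into compact open $W_k$, and the block-by-block identification with $W_k\times[n_k]^2$) for the covering-map direction, and the same use of Lemma~\ref{lem:elementary} plus the induced homeomorphism of quotients for the converse. The only differences are cosmetic: you take $W_k$ rather than $\varphi(W_k)=\mathcal{U}_1^{(k)}$ as the space factor, and you package the converse via the standard fact that two quotient maps with equal fibres differ by a homeomorphism of codomains, which the paper verifies directly by checking its map $\rho$ is injective and open.
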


\begin{proof}
By Lemma~\ref{lem:section} there exists a section $\varphi: Y \rightarrow X$ of $\sigma$ that is a local homeomorphism. Set $X_\varphi = \varphi(Y)$.
First, suppose that $R(\sigma)$
is Renault-elementary. We will show that $\sigma$ is a covering map. By Lemma~\ref{lem:elementary}, there  exists a space $\widetilde Y$, a covering map $\tilde\sigma:X\to \widetilde Y$ and an isomorphism $\mu:R(\sigma)\to R(\tilde\sigma)$ of topological groupoids. 

Since $\mu$ preserves the source and range and restricts to a homeomorphism $\mu|:X\to X$, for all $x,x'\in X$ we have 
\[
\sigma(x)=\sigma(x')\quad\Longleftrightarrow\quad \tilde\sigma(\mu|(x))=\tilde\sigma(\mu|(x')).
\]
It follows that there exists a well-defined quotient map $\rho:\widetilde Y\to Y$ such that $\sigma=\rho\circ\tilde\sigma\circ\mu|$. 

We claim that $\rho$ is a homeomorphism. Suppose that $\rho(y)=\rho(y')$. Choose $x,x'\in X$ such that $y=\sigma(x)$ and $y'=\sigma(x')$.
Then
\[
\tilde\sigma(\mu|(x))=\rho(\sigma(x))=\rho(\sigma(x'))=\tilde\sigma(\mu|(x'))
\]
which implies $\sigma(x)=\sigma(x')$, that is, $y=y'$. Thus $\rho$ is injective.  To see that $\rho$ is open, let $U$ be open in $\widetilde Y$. Then $\rho(U)=\sigma(\mu|(\tilde\sigma^{-1}(U)))$ is open because $\sigma$ and $\mu$ are, and $\tilde \sigma$ is continuous. Thus $\rho$ is a homeomorphism as claimed.   Now $\sigma=\rho\circ\tilde\sigma\circ\mu|$ is a covering map because $\tilde\sigma$ is and $\mu|$ and $\rho$ are homeomorphisms. 

Conversely,  suppose that $\sigma$ is a covering map and  let $y \in Y$. By Lemma \ref{lemma:covmap-sets}, there exist a compact open neighbourhood $\widetilde{W_y}$ of $y$, a positive $n_y \in \NN \cup \{\infty\}$, and a sequence of $n_y$ disjoint compact open sets $\widetilde{\mathcal{U}}_1^{(y)}, \widetilde{\mathcal{U}}_2^{(y)}, \dots$ such that
\begin{align*}
\sigma^{-1}(\widetilde{W}_y)
= \bigcup_{j=1}^{n_y} \widetilde{\mathcal{U}}_j^{(y)},
\end{align*}
each $\sigma\vert_{\widetilde{\mathcal{U}}_j^{(y)}}: \widetilde{\mathcal{U}}_j^{(y)} \rightarrow \widetilde{W}_y$ is a homeomorphism and $\sigma^{-1}(\widetilde{W}_y) \cap X_\varphi = \widetilde{\mathcal{U}}_1^{(y)}$.

Since $Y$ is Hausdorff and second countable, there exists a possibly finite sequence of $N$ elements $y_1, y_2, \dots$ in $Y$ such that $\widetilde{W}_{y_1}, \widetilde{W}_{y_2}, \dots$ cover $Y$ and, for all $i$, we have
\begin{align}
\label{eqn:Wtildeyneeded2}
\widetilde{W}_{y_i}
\not\subset
\bigcup_{\substack{k=1 \\ k \neq i}}^N \widetilde{W}_{y_k}.
\end{align}
We set $W_1 \coloneqq \widetilde{W}_{y_1}$, and $W_i \coloneqq \widetilde{W}_{y_i} \setminus \bigcup_{k=1}^{j-1} W_k$ for $j \geq 2$. Property~\eqref{eqn:Wtildeyneeded2} guarantees that all the  $W_j$ are nonempty. Moreover, the union $Y = \bigcup_{i=1}^N W_i$ is disjoint.

For each $i \in [N]$, let $n_i = n_{y_i}$. Given $j \in [n_i]$, set $\mathcal{U}_j^{(i)} \coloneqq \widetilde{\mathcal{U}}_j^{(y_i)} \cap \sigma^{-1}(W_i)$. Then
\begin{enumerate}[(i)]
\item $\sigma\vert_{\mathcal{U}_j^{(i)}}: \mathcal{U}_j^{(i)} \rightarrow W_i$ is a homeomorphism,

\item the union $\sigma^{-1}(W_i) = \bigcup_{j=1}^{n_i} \mathcal{U}_j^{(i)}$ is disjoint, and
\item $\sigma^{-1}(W_i) \cap X_\varphi = \mathcal{U}_1^{(i)}$.
\end{enumerate}
So for each $i$, $\varphi(W_i)=\mathcal{U}_1^{(i)}$.
Further, $X_\varphi$ is the disjoint union $X_\varphi = \bigcup_{i=1}^N \mathcal{U}_1^{(i)}$. 

Now consider the disjoint union 
\begin{align*}
G \coloneqq \bigcup_{i=1}^N \varphi(W_i) \times [n_i]^2  = \bigcup_{i=1}^N \mathcal{U}_1^{(i)} \times [n_i]^2.
\end{align*}
Then $G$ is a Renault-elementary groupoid with totally disconnected unit space.
We claim that $R(\sigma)$ is isomorphic to $G$.

 Let $(u,v)\in R(\sigma)$, so that $\sigma(u)=\sigma(v)$. There exists a unique $i\in[N]$ such that $\sigma(u)=\sigma(v)\in W_i$. Since $\sigma^{-1}(W_i)$ is equal to the disjoint union $\bigcup_{j=1}^{n_i} \mathcal{U}_j^{(i)}$, there exist unique $j,k\in [n_i]$ such that 
 $u\in \mathcal{U}_j^{(i)}$ and $v \in \mathcal{U}_k^{(i)}$.
 Thus $\varphi(\sigma(u)) \in \mathcal{U}_1^{(i)}$ and $\rho(u,v)\coloneqq(\varphi(\sigma(u)), j, k)$ gives a well-defined map $\rho: R(\sigma) \rightarrow G$. 
We claim that $\rho$ is an isomorphism of topological groupoids.

To see that $\rho$ is a homomorphism, let $u, v, w \in X$  such that $\sigma(u) = \sigma(v) = \sigma(w)$. Let $i$ be the unique element of  $[N]$ such that $\sigma(u)\in W_i$. Then $\rho(u,v)=(\varphi(\sigma(u)), j, k)$ and $\rho(v,w)=(\varphi(\sigma(u)), k, l)$ for  unique $j,k,l\in [n_i]$.  Thus $\rho(u,v)$ and $\rho(v,w)$ are composable, and 
\[
\rho(u,v)\rho(v,w)=(\varphi(\sigma(u)), j, l)=\rho(u,w)=\rho\big((u,v)(v,w)  \big).
\]
Further,
\[
\rho\big((u,v)^{-1}\big)=\rho(v,u)=(\varphi(\sigma(v)), k, j)=(\varphi(\sigma(v)), j, k)^{-1}=\rho(u,v)^{-1}.
\]
Thus $\rho$ is a homomorphism.

To see that $\rho$ is injective, let $(u_1,v_1), (u_2, v_2)\in R(\sigma)$, and suppose that $\rho(u_1, v_1) = \rho(u_2, v_2)$. Then there exists a unique $i \in [N]$ such that $\sigma(u_1) \in W_i$, and there are unique $j,k\in [n_i]$ such that $\rho(u_1, v_1) = (\varphi(\sigma(u_1)), j, k)$. By assumption, 
\[(\varphi(\sigma(u_1)), j, k)=\rho(u_1, v_1) = \rho(u_2, v_2)= (\varphi(\sigma(u_2)), j, k).\]
Thus $\varphi(\sigma(u_1))=\varphi(\sigma(u_2))$ and hence $\sigma(u_1)=\sigma(\varphi(\sigma(u_1)))=\sigma(\varphi(\sigma(u_2))=\sigma(u_2)$.  Now $u_1, u_2\in \Uu^{(i)}_j$ and $v_1, v_2\in \Uu^{(i)}_k$ with $\sigma(u_1)=\sigma(u_2)=\sigma(v_1)=\sigma(v_2)$.  Since $\sigma$ is injective on $\mathcal{U}_j^{(i)}$ and $\mathcal{U}_k^{(i)}$,  $u_1 = u_2$ and $v_1 = v_2$. Thus $\rho$ is injective.

To see that $\rho$ is surjective, let $(x,j,k) \in G$.  Since $x\in X_\varphi$, there exists a unique $i\in [N]$ with $x\in \Uu_1^{(i)}$, and then $j,k\in [n_i]$.
Let $u = \sigma\vert_{\mathcal{U}_j^{(i)}}^{-1}(\sigma(x))$ and $v = \sigma\vert_{\mathcal{U}_k^{(i)}}^{-1}(\sigma(x))$. Then $\sigma(u) = \sigma(v) \in W_i$ and 
\[
\varphi(\sigma(u))=\varphi(\sigma(\sigma\vert_{\mathcal{U}_j^{(i)}}^{-1}(\sigma(x))))=\varphi(\sigma(x))=x
\]
because $x\in X_\varphi$. 
Hence $(u,v) \in R(\sigma)$ with $\rho(u,v) = (x,j,k)$.  Thus $\rho$ is surjective.  Moreover, we now have a formula for $\rho^{-1}\colon G\to R(\sigma)$ given by 
\begin{equation}\label{eq: rho inverse}
\rho^{-1}(x, j, k)=\big( \sigma\vert_{\mathcal{U}_j^{(i)}}^{-1}(\sigma(x)), \sigma\vert_{\mathcal{U}_k^{(i)}}^{-1}(\sigma(x))\big)
\end{equation}
where $i\in [N]$ is unique such that $x\in \Uu^{(i)}_1$.

To see that $\rho$ is continuous, suppose that $(u_n, v_n)\to (u,v)$ in $R(\sigma)$ as $n\to\infty$. Thus for all $n$, $\sigma(u_n)=\sigma(v_n)$, and there exist unique $i \in [N]$ and $j,k \in [n_i]$ such that $\sigma(u)=\sigma(v)\in W_i$ and $\rho(u,v)=(\varphi(\sigma(u)), j,k)$. Since $u_n\to u$ and $v_n\to v$ in $X$ we eventually have $\sigma(u_n)=\sigma(v_n)\in W_i$,  $u_n\in \Uu_j^{(i)}$ and $v_n\in \Uu_k^{(i)}$. Thus  $\rho(u_n, v_n) = (\varphi(\sigma(u_n)), j, k)$ eventually. Since $\varphi(\sigma(u_n))\to \varphi(\sigma(u))$ it follows that $\rho(u_n, v_n)\to 
\rho(u,v)$. Thus $\rho$ is continuous. 

To see that $\rho^{-1}$ is continuous, suppose that $(x_n, j_n, k_n)\to (x, j, k)$ in $G$. Say $x\in \Uu^{(i)}_1$. Then $x_n\to x$ in $X$ and $x_n\in \Uu^{(i)}_1$ eventually, and also $j_n=j, k_n=k\in [n_i]$ eventually. From~\eqref{eq: rho inverse} we have
$\rho^{-1}(x_n, j, k)=\big( \sigma\vert_{\mathcal{U}_j^{(i)}}^{-1}(\sigma(x_n)), \sigma\vert_{\mathcal{U}_k^{(i)}}^{-1}(\sigma(x_n))\big)$, and hence
$\rho^{-1}(x_n, j_n, k_n)\to \rho^{-1}(x, j, k)$ by continuity of $\sigma, \sigma\vert_{\mathcal{U}_j^{(i)}}^{-1}$ and $\sigma\vert_{\mathcal{U}_k^{(i)}}^{-1}$.
We conclude that $\rho$ is an isomorphism of topological groupoids, and it follows that $R(\sigma)$ is Renault-elementary.
\end{proof}

Every covering map is a surjective local homeomorphism, and every surjective local homeomorphism on a compact space is a covering map.  Thus we obtain the following corollary of Theorem~\ref{thm:elementary2}.

\begin{corollary} 
\label{cor}Let $G$ be a second-countable, locally compact, Hausdorff groupoid with a compact and totally disconnected unit space. Then $G$ is Renault-elementary if and only if $G$ is FKPS-elementary.
\end{corollary}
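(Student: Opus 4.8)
The plan is to derive the corollary directly from Theorem~\ref{thm:elementary2}, Lemma~\ref{lem:elementary}, and the two elementary observations about covering maps recorded just above the statement, treating the two implications separately. The conceptual point is that, under the compactness hypothesis on $G^{(0)}$, the surjective local homeomorphism attached to an FKPS-elementary groupoid is automatically a covering map, which is exactly the condition that Theorem~\ref{thm:elementary2} requires.

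For the implication that Renault-elementary implies FKPS-elementary, I would invoke Lemma~\ref{lem:elementary}: since $G$ is Renault-elementary with totally disconnected unit space, there exist a second-countable, locally compact, Hausdorff space $Y$ and a covering map $\sigma\colon G^{(0)}\to Y$ with $G\cong R(\sigma)$. Here $Y$ inherits total disconnectedness from $G^{(0)}$ (it is a countable union of totally disconnected pieces $X_i$), so $\sigma$ is a surjective local homeomorphism between spaces of the type required by Definition~\ref{def:FKPS}, and hence $G\cong R(\sigma)$ is FKPS-elementary. Note that this direction uses no compactness at all; it is simply Lemma~\ref{lem:elementary} together with the fact that every covering map is a surjective local homeomorphism.

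For the reverse implication, suppose $G$ is FKPS-elementary, so that $G\cong R(\sigma)$ for some surjective local homeomorphism $\sigma\colon X\to Y$ between second-countable, locally compact, Hausdorff, totally disconnected spaces, where $X\cong G^{(0)}$ is compact. Here I would use that a surjective local homeomorphism with compact domain is a covering map: each fibre $\sigma^{-1}(y)$ is closed (as $Y$ is Hausdorff and $\sigma$ is continuous) and discrete (as $\sigma$ is a local homeomorphism), hence finite; choosing pairwise disjoint open neighbourhoods $O_1,\dots,O_m$ of the finitely many preimages of $y$ on which $\sigma$ restricts to a homeomorphism, and then deleting the closed set $\sigma\big(X\setminus\bigcup_i O_i\big)$ from the common image, produces an evenly covered neighbourhood of $y$. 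Having upgraded $\sigma$ to a covering map, Theorem~\ref{thm:elementary2} immediately gives that $R(\sigma)$, and therefore $G$, is Renault-elementary.

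I expect no serious obstacle, since essentially all of the work is carried by Theorem~\ref{thm:elementary2}. The only genuinely new ingredient is the compact-domain argument in the reverse direction, and this is exactly where the compactness hypothesis is indispensable: without it a surjective local homeomorphism need not be a covering map, and the two notions of elementary groupoid genuinely diverge, as witnessed by Example~\ref{example:Cantorelementary2}. Thus the main content of the corollary is the identification of compactness as the hypothesis that forces local homeomorphisms to be covering maps.
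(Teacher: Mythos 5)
Your proposal is correct and takes essentially the same route as the paper, which obtains the corollary from Theorem~\ref{thm:elementary2} together with the two facts that every covering map is a surjective local homeomorphism and that every surjective local homeomorphism on a compact space is a covering map. You merely spell out what the paper leaves implicit: the appeal to Lemma~\ref{lem:elementary} (including the total disconnectedness of $Y$) in the forward direction, and the finite-fibre argument upgrading $\sigma$ to a covering map in the reverse direction.
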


The following example gives a surjective local homeomorphism $\sigma$ that is not a covering map.  Thus it follows from Theorem~\ref{thm:elementary2} that $R(\sigma)$ is FKPS-elementary but not Renault-elementary.

\begin{example}  
\label{example:Cantorelementary2} 
Let $C \subset [0,1]$ be the Cantor set, and write $2\pi C$ for its scaled copy in $[0, 2\pi]$. Let
\begin{align*}
X \coloneqq\big(2\pi C \cup (2\pi C+ 2\pi)\big) \setminus \lbrace0, 4\pi \rbrace
\subset (0, 4\pi).
\end{align*}
Let $\theta: \mathbb{R} \rightarrow S^1$ be the map $\theta(t) = (\cos t, \sin t)$. Then $\theta$ is a surjective local homeomorphism (see \cite[Theorem 53.1]{Munkres}).   Then the restriction 
$\sigma: X \rightarrow \theta(X)$ of $\theta$ to $X$, where $\theta(X)$ is equipped with the subspace topology from $S^1$,  is a surjective local homeomorphism. 

Suppose, looking for a contradiction,  that $\sigma$ is a covering map. Let $y = (1,0)$. Then $y$ has a compact open neighbourhood $W$ such that $\sigma^{-1}(W)$ is the disjoint union on a possibly infinite family of $n$ subsets $\mathcal{U}_1, \mathcal{U}_2, \dots$ where $\sigma\vert_{\mathcal{U}_i}: \mathcal{U}_i \rightarrow W$ is a homeomorphism for all $i$. Then $n$ is the cardinality of $\sigma^{-1}(z)$ for all $z \in W$. 

Since $\theta$ is continuous and since $X$ has no isolated points, there exists an $x \in \sigma^{-1}(W)$ such that $\sigma(x) \neq (1,0)$. Assume that $x > 2\pi$, as the proof for $x < 2\pi$ is analogous. The definition  of $X$ implies that $x - 2\pi \in X$, and the periodicity of $\theta$ implies that $\sigma(x - 2\pi) = \theta(x)$. Then $n = \vert \sigma^{-1}(\sigma(x)) \vert \geq 2$. However, $\sigma^{-1}((1,0)) = \lbrace 2\pi \rbrace$, which implies that $n = 1$, a contradiction. Therefore $\sigma$ is not a covering map.
\end{example}

Although the two definitions of elementary groupoid are not equivalent, we now prove that the two definitions of AF groupoid are equivalent.

\begin{theorem}
\label{thm:AFgpdequiv}
A groupoid is AF in the sense of Renault (Definition~\ref{def:Renault}) if and only if it is AF in the sense of FKPS (Definition~\ref{def:FKPS}).
\end{theorem}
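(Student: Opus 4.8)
The plan is to prove the two implications separately. The forward direction (Renault $\Rightarrow$ FKPS) is immediate: if $G=\bigcup_n G_n$ is an increasing union of open Renault-elementary subgroupoids with common totally disconnected unit space, then by Lemma~\ref{lem:elementary} each $G_n$ is isomorphic to $R(\sigma_n)$ for a covering map $\sigma_n$, and since every covering map is a surjective local homeomorphism, each $G_n$ is FKPS-elementary. Hence $G$ is AF in the sense of FKPS. The reverse direction is the substance of the proof and is where I would concentrate.

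The first ingredient is a localisation of a single FKPS-elementary groupoid. Given a surjective local homeomorphism $\sigma\colon X\to Y$, I would fix a decomposition $X=\bigsqcup_i \mathcal{U}_i$ into disjoint compact open sets on which $\sigma$ is injective, exactly as in the proof of Lemma~\ref{lem:section}, so that each $B_{ij}\coloneqq R(\sigma)\cap(\mathcal{U}_i\times\mathcal{U}_j)$ is a compact open bisection and $R(\sigma)=\bigsqcup_{i,j}B_{ij}$. For a finite index set $F$, the set $K_F\coloneqq\bigl(\bigcup_{i,j\in F}B_{ij}\bigr)\sqcup\bigl(\bigsqcup_{i\notin F}\mathcal{U}_i\bigr)$ is an open subgroupoid with unit space $X$; its $F$-part equals $R(\sigma|_{\bigcup_{i\in F}\mathcal{U}_i})$, and because $\bigcup_{i\in F}\mathcal{U}_i$ is compact this restriction is a covering map, so by Theorem~\ref{thm:elementary2} that part is Renault-elementary, while the remaining units form a type-$1$ elementary groupoid; thus $K_F$ is Renault-elementary. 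Since a compact subset of $R(\sigma)$ meets only finitely many of the $B_{ij}$, this yields the key consequence: every compact subset of $R(\sigma)$ is contained in a Renault-elementary open subgroupoid with unit space $X$.

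The crux is then an enlargement lemma: if $K$ is a Renault-elementary open subgroupoid with unit space $X$ of an FKPS-elementary groupoid $R(\tau)$, and $D\subseteq R(\tau)$ is compact, then there is a Renault-elementary open subgroupoid $L$ with unit space $X$ and $K\cup D\subseteq L\subseteq R(\tau)$. The obstacle is that $K$ need not be compact, so the previous step does not apply to $K$ directly. To circumvent this I would write $K=R(\pi)$ for a covering map $\pi\colon X\to W$ (Lemma~\ref{lem:elementary}); since $R(\pi)=K\subseteq R(\tau)$, the map $\tau$ factors as $\tau=g\circ\pi$ with $g\colon W\to Z$ a surjective local homeomorphism, and $\Pi\colon R(\tau)\to R(g)$, $\Pi(x_1,x_2)=(\pi(x_1),\pi(x_2))$, is a continuous groupoid homomorphism. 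Applying the localisation step to the compact set $\Pi(D)\subseteq R(g)$ produces a Renault-elementary open subgroupoid $R(\rho)\subseteq R(g)$ with unit space $W$ containing $\Pi(D)$, where $\rho\colon W\to V$ is a covering map. Then $L\coloneqq\Pi^{-1}(R(\rho))=R(\rho\circ\pi)$, and since a composite of covering maps is a covering map, $L$ is Renault-elementary by Theorem~\ref{thm:elementary2}, with unit space $X$; a direct check gives $K\cup D\subseteq L\subseteq R(\tau)$ and that $L$ is open. The point of this detour is that pulling back along $\Pi$ automatically retains all of $K$ while only requiring us to capture the compact image $\Pi(D)$.

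Finally I would assemble the reverse direction by a diagonal argument. Write $G=\bigcup_n H_n$ as an increasing union of open FKPS-elementary subgroupoids with unit space $X$; since $G$ is étale with second-countable, locally compact, Hausdorff, totally disconnected unit space, $G$ itself is covered by a countable family of compact open bisections $D_1,D_2,\dots$. I would build an increasing sequence $L_1\subseteq L_2\subseteq\cdots$ of Renault-elementary open subgroupoids with unit space $X$ recursively: starting from $L_0=X$, and having $L_{m-1}\subseteq H_{q_{m-1}}$, choose $q_m\geq q_{m-1}$ with $D_1\cup\cdots\cup D_m\subseteq H_{q_m}$, and apply the enlargement lemma inside $R(\tau)=H_{q_m}$ with $K=L_{m-1}$ and $D=D_1\cup\cdots\cup D_m$ to obtain $L_m\subseteq H_{q_m}$. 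Then $\bigcup_m L_m\supseteq\bigcup_m D_m=G$, so $G=\bigcup_m L_m$ is AF in the sense of Renault. I expect the enlargement lemma — in particular the factorisation $\tau=g\circ\pi$ and the verification that $\Pi^{-1}$ sends Renault-elementary subgroupoids to Renault-elementary subgroupoids via composition of covering maps — to be the main obstacle, with the diagonal assembly being routine bookkeeping.
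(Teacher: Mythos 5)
Your forward direction is exactly the paper's. For the converse you take a genuinely different route. The paper never enlarges an already-constructed Renault-elementary subgroupoid: it fixes one exhaustion $X=\bigcup_k X_k$ by compact opens, truncates each $\sigma_n$ to $\sigma_{n,k}$ (equal to $\sigma_n$ on $X_k$ and the identity off $X_k$, so that $\sigma_{n,k}$ is a covering map), verifies $R(\sigma_{n,k})\subset R(\sigma_{n,k+1})$ and, using the hypothesis $G_n\subset G_{n+1}$ directly, $R(\sigma_{n,k})\subset R(\sigma_{n+1,k})$, and then diagonalises: $G=\bigcup_k R(\sigma_{k,k})$. Your localisation step is essentially the same truncation trick in disguise (your $K_F$ equals $R(\rho)$ for the map $\rho$ that is $\sigma$ on the compact open $U_F$ and the identity off it), but your assembly is different: instead of exploiting the given increasing structure through the maps $\sigma_n$, you absorb the previous stage and a new compact piece simultaneously via the factorisation $\tau=g\circ\pi$ and the pullback along $\Pi$. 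This buys two statements of independent interest (every compact subset of an FKPS-elementary groupoid lies in an open Renault-elementary subgroupoid with full unit space, and the enlargement lemma) at the cost of extra machinery. The small points you would need to spell out are all fine: $K$, being a subgroupoid of an equivalence relation with unit space $X$, is itself an equivalence relation; the isomorphism of Lemma~\ref{lem:elementary} is the identity on units, so for a principal groupoid it is forced to be $\gamma\mapsto(r(\gamma),s(\gamma))$ and $K=R(\pi)$ holds literally, with matching topologies since both sides carry the subspace topology from $X\times X$; and $g$ is continuous because a covering map is an open surjection, hence a quotient map.

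There is, however, one genuine unjustified step: ``since a composite of covering maps is a covering map.'' This is false as a general topological fact — classically, a covering of a covering need not be a covering, the standard sufficient condition being that the second map have locally finitely many sheets — and it is not a quotable fact even for the totally disconnected, locally compact, second-countable spaces occurring here. Fortunately your specific $\rho$ \emph{is} locally finite-sheeted: the non-identity part of $\rho$ is the restriction of $g$ to the compact open set $U_F$ (with $F$ finite), so fibres of $\rho$ over points of $g(U_F)$ are finite, being discrete closed subsets of the compact set $U_F$, and points of $W\setminus U_F$ have a single sheet. With finitely many sheets $W_1,\dots,W_r$ over an evenly covered neighbourhood $N$ of $v$, you can choose for each $i$ an open $N_i\subset W_i$ around $\rho\vert_{W_i}^{-1}(v)$ that is evenly covered by $\pi$, shrink to $N'=\bigcap_{i=1}^r\rho(N_i)$ (a finite intersection, open since $\rho$ is open), and conclude that $\rho\circ\pi$ evenly covers $N'$; hence $\rho\circ\pi$ is a covering map and Theorem~\ref{thm:elementary2} applies to $L$. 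So the enlargement lemma is true for the reason that your $\rho$ comes from a finite $F$, not for the general reason you cite; as written the proof has a real gap at this point, and if the recursion ever produced an infinite-sheeted $\rho$ the cited ``fact'' would offer no protection. Note that the paper's truncation-and-diagonal argument avoids this issue entirely because it never composes covering maps.
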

\begin{proof} 
Let $G$ be AF according to Definition~\ref{def:Renault}. Then $G = \bigcup_n G_n$ is the increasing union of a sequence of Renault-elementary open subgroupoids $G_n$ with the same totally disconnected unit space $X$. By Lemma~\ref{lem:elementary}, for every $n$, there exists a covering map $\sigma_n: X \rightarrow Y_n$  such that $G_n \cong R(\sigma_n)$. Thus   $G$ is  AF according to Definition~\ref{def:FKPS}.

Conversely, let $G$ be an AF groupoid according to Definition~\ref{def:FKPS}.  Then $G = \bigcup_n G_n$ is an increasing union of open FKPS-elementary subgroupoids $G_n$ with the same unit space $X=G^{(0)}$. Then for all $n$ there exist second-countable, locally compact, Hausdorff, totally disconnected spaces $V_n, Y_n$ and a surjective local homeomorphism $\sigma_n : V_n \rightarrow Y_n$ such that $G_n \cong R(\sigma_n)$; since this isomorphism restricts to a homeomorphism of unit spaces, we may  assume that $V_n=X$ for each $n$.

If $X$ is compact, the result follows from Corollary~\ref{cor}.  So we assume $X$ is not compact. Since $X$ is locally compact, totally disconnected and second countable, there exists an infinite increasing sequence of compact open subsets $X_1, X_2, \dots$ such that $X = \bigcup_{k=1}^\infty X_k$.  We may, by replacing $Y_n$ with $Y_n\times\{1\}$ if necessary, assume that $X$ and $Y_n$ are disjoint for all $n$.
Let $n, k \geq 1$. Define   $\sigma_{n,k}: X \rightarrow \sigma_n(X_k) \cup (X \setminus X_k)\subset Y_n\sqcup X$ by
\[
\sigma_{n,k}(x)
= \begin{cases}
\sigma_n(x) & \text{if } x \in X_k, \\
x & \text{otherwise.}
\end{cases}
\]
Since $\sigma_{n,k}\vert_{X_k}: X_k \rightarrow \sigma_n(X_k)$ is the restriction of the surjective local homeomorphism $\sigma_n$ to the compact open set $X_k$, we deduce that $\sigma_{n,k}\vert_{X_k}$ is a covering map. Further, the restriction of $\sigma_{n,k}$ to $X \setminus X_k$ is the identity map.  Since $X_k$ and $X\setminus X_k$ are open, it follows  that $\sigma_{n,k}$ is a covering map.

For the next few claims we make the following observation about $(x,x') \in R(\sigma_{n,k})$. If 
 $x \in X_k$, then $\sigma_{n,k}(x') = \sigma_{n,k}(x) \in \sigma_n(X_k)$, which implies that $x' \in X_k$ because $Y_n$ and $X$ are disjoint.  If $x\notin X_k$, then  $x=\sigma_{n,k}(x')\in X_k$ implies that $x'\notin X_k$  because $Y_n$ and $X$ are disjoint, and then $x=x'$.

We claim that $G_n$ is the increasing union of the $R(\sigma_{n,k})$, that is,
\begin{equation}\label{eq-containment1}
R(\sigma_{n,k}) \subset R(\sigma_{n,k+1})\subset \bigcup_{k=1}^\infty R(\sigma_{n,k})=G_n.
\end{equation}
Let $(x,x') \in R(\sigma_{n,k})$. First assume that $x \in X_k$. Then $x'\in X_k$ also.  Then
$\sigma_n(x) = \sigma_{n,k}(x) = \sigma_{n,k}(x') = \sigma_n(x')$, and hence $(x,x')\in  G_n$.
Moreover, $x, x' \in X_k \subset X_{k+1}$ implies that
$\sigma_{n,k+1}(x) = \sigma_{n, k+1}(x')$, and hence $(x,x')\in R(\sigma_{n,k+1})$. 
Second, assume that $x \notin X_k$. Then  $x=x'$,  and hence $(x,x')$ is an element of both $ R(\sigma_{n,k+1})$ and $G_n$. Thus 
$R(\sigma_{n,k}) \subset R(\sigma_{n,k+1})\subset G_n$.
Finally, if $(x,x')\in G_n$ then there exists $k$ such that $x,x'\in X_k$, and then $(x,x')\in R(\sigma_{n,k})$, proving the claim.

Next, we claim that
\begin{equation}\label{eq-containment2}
R(\sigma_{n,k}) \subset R(\sigma_{n+1, k}),
\end{equation}
To see this, let $(x,x') \in R(\sigma_{n,k})$. Then $(x,x')\in G_n \subset G_{n+1}$, and hence $\sigma_{n+1}(x)=\sigma_{n+1}(x')$. If $x_n\in X_k$, then $x' \in X_k$, and 
$\sigma_{n+1, k}(x) = \sigma_{n+1}(x) = \sigma_{n+1}(x') = \sigma_{n+1, k}(x')$, giving $(x,x')\in R(\sigma_{n+1, k})$.
If $x\notin X_k$, then $x'\notin X_k$, and $x=\sigma_{n,k}(x)=\sigma_{n,k}(x')=x'$, giving $(x,x')\in R(\sigma_{n+1, k})$.

To see that $R(\sigma_{n,k})$ is open in $G$, let $(x,x') \in R(\sigma_{n,k})$.  First, suppose that  $x \in X_k$. Then $x' \in X_k$. Then $G_n\cap (X_k\times X_k)$ is open in $G$ because $G_n$ is, and for $(u,v) \in G_n\cap (X_k\times X_k)$, we have 
$\sigma_{n,k}(u)
= \sigma_n(u)
= \sigma_n(v)
= \sigma_{n,k}(v),
$
giving $(u,v) \in R(\sigma_{n,k})$.  Second, suppose that  $x \notin X_k$. Then $x' \notin X_k$ and $x=\sigma_{n,k}(x)=\sigma_{n,k}(x')=x'$. Let $\Uu$ be an open neighbourhood of $x$ contained in $X\setminus X_k$ such that $\sigma_n\vert_\Uu$ is injective. 
Then $G_n\cap (\Uu\times \Uu)$ is open in $G$  and for $(u,v) \in G_n\cap (\Uu\times \Uu)$, we have  $u=v$, again
giving $(u,v) \in R(\sigma_{n,k})$.  Thus $R(\sigma_{n,k})$ is open in $G$. 
 
Using \eqref{eq-containment1} and \eqref{eq-containment2} we have
\[
G = \bigcup_{n=1}^\infty G_n = \bigcup_{n=1}^\infty \bigcup_{k=1}^\infty R(\sigma_{n,k})= \bigcup_{k=1}^\infty R(\sigma_{k,k}),
\]
 an increasing union of open subgroupoids $R(\sigma_{k,k})$ with unit space $X$.
Since each $\sigma_{k,k}$ is a covering map, it follows from  Theorem \ref{thm:elementary2} that $R(\sigma_{k,k})$ is elementary. Therefore $G$ is AF according to Definition~\ref{def:Renault}.
\end{proof}

\begin{corollary}\label{cor:AF-rel-groupoid}
A groupoid is AF if and only if it is an AF-equivalence relation.
\end{corollary}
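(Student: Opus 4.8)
The plan is to prove the two implications separately, using Theorem~\ref{thm:AFgpdequiv} to pass freely between the Renault and FKPS presentations of an AF groupoid. The hypotheses on the base match automatically: an AF groupoid $G=\bigcup_n R(\sigma_n)$ is an increasing union of the equivalence relations $R(\sigma_n)$ sharing the unit space $X$, so $G$ is itself an \'etale equivalence relation on the second-countable, locally compact, Hausdorff, totally disconnected space $X$; conversely an AF-equivalence relation is such a groupoid by definition. Thus the only content is to match ``increasing union of open elementary subgroupoids'' with ``inductive limit of CEERs'', where throughout we read the latter as an increasing union $R=\bigcup_n R_n$ of open CEER subgroupoids $R_n$ with $R_n^{(0)}=X$.

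For the direction ``AF $\Rightarrow$ AF-equivalence relation'', I would start from the FKPS presentation $G=\bigcup_n R(\sigma_n)$ and fix an increasing exhaustion $X=\bigcup_k X_k$ by compact open sets. The key construction is the ``brick''
\[
H_{n,k}\coloneqq\bigl(R(\sigma_n)\cap (X_k\times X_k)\bigr)\cup X,
\]
where $X$ denotes the full diagonal. A routine check shows each $H_{n,k}$ is an open subgroupoid of $G$ that is an equivalence relation with unit space $X$. To see it is a CEER, note that $R(\sigma_n)=(\sigma_n\times\sigma_n)^{-1}(\Delta_{Y_n})$ is closed in $X\times X$ and that $X$ is open in the \'etale groupoid $R(\sigma_n)$; hence $R(\sigma_n)\setminus X$ is closed in $X\times X$, so $H_{n,k}\setminus X=(R(\sigma_n)\setminus X)\cap(X_k\times X_k)$ is a closed subset of the compact set $X_k\times X_k$ and therefore compact. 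Since $H_{n,k}\subseteq H_{n,k+1}$ (as $X_k\subseteq X_{k+1}$) and $H_{n,k}\subseteq H_{n+1,k}$ (as $R(\sigma_n)\subseteq R(\sigma_{n+1})$), the diagonal sequence $H_{k,k}$ is increasing with $\bigcup_k H_{k,k}=\bigcup_{n,k}H_{n,k}=G$, exhibiting $G$ as an inductive limit of CEERs.

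For the converse, the crux is a structure result: \emph{every CEER is Renault-elementary}. Given a CEER $R$ on $X$, first observe that $X$ is clopen in $R$ (open because $R$ is \'etale, closed because $R$ is Hausdorff), so $R\setminus X$ is compact open. Covering the compact $R\setminus X$ by finitely many disjoint clopen bisections shows that the fibre-count $x\mapsto |s^{-1}(x)\cap(R\setminus X)|=|[x]|-1$ is a finite sum of indicators of clopen sets, hence locally constant; consequently the class-size partition $X=X_1\sqcup\bigsqcup_{m=2}^{M}X^{(m)}$ into clopen pieces has $X_{\ge 2}\coloneqq\bigsqcup_{m\ge2}X^{(m)}=r(R\setminus X)$ compact, so only finitely many $X^{(m)}$ are nonempty and all classes are finite. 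On $X_1$ the relation is trivial, giving the type-$1$ elementary groupoid $X_1\times[1]^2$. On each compact $X^{(m)}$ the quotient map $q_m\colon X^{(m)}\to X^{(m)}/R$ is a surjective local homeomorphism onto a compact, Hausdorff, totally disconnected space, hence a covering map by the observation preceding Corollary~\ref{cor}; so $R|_{X^{(m)}}=R(q_m)$ is Renault-elementary by Theorem~\ref{thm:elementary2}. Assembling the finitely many pieces as a disjoint union shows $R$ is Renault-elementary. Applying this to each $R_n$ in $R=\bigcup_n R_n$ presents $R$ as an increasing union of open Renault-elementary subgroupoids with unit space $X$, that is, as AF in the sense of Definition~\ref{def:Renault}, and hence AF by Theorem~\ref{thm:AFgpdequiv}.

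The main obstacle is the structure result for CEERs used in the converse: proving that the class-size function is locally constant and bounded, and that each quotient $q_m$ is a genuine local homeomorphism onto a second-countable, locally compact, Hausdorff, totally disconnected space, so that Theorem~\ref{thm:elementary2} applies. The forward direction, by contrast, is essentially the diagonal argument already used in Theorem~\ref{thm:AFgpdequiv}, the only new point being the compactness of $H_{n,k}\setminus X$, which follows from $R(\sigma_n)$ being closed in $X\times X$. A final bookkeeping point in both directions is to confirm that the inductive-limit topology on $R$ agrees with the direct-limit topology of its increasing family of open subgroupoids, which is automatic since each subgroupoid is open.
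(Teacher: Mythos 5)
Your proposal is correct, and one half of it coincides with the paper's argument: for the direction AF $\Rightarrow$ AF-equivalence relation, your bricks $H_{n,k}=\bigl(R(\sigma_n)\cap(X_k\times X_k)\bigr)\cup X$ are exactly the paper's groupoids $G_n\vert_{X_n}\sqcup(X\setminus X_n)$, with the same compactness argument (off-diagonal part closed in the compact $X_k\times X_k$) and the same diagonal trick to get an increasing sequence of CEERs. Where you genuinely diverge is the converse: the paper disposes of it in one line by citing \cite[Lemma~3.4]{GPS}, which says precisely that every CEER is Renault-elementary, whereas you re-prove that structure result from scratch --- covering the compact open set $R\setminus X$ by finitely many disjoint compact open bisections, deducing that the class-size function is a finite sum of indicators of clopen sets and hence locally constant and bounded, partitioning $X$ into clopen pieces $X^{(m)}$ by class size, and realising $R\vert_{X^{(m)}}$ as $R(q_m)$ for the quotient map $q_m$, which is a covering map by compactness, so that Theorem~\ref{thm:elementary2} applies. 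Your route makes the corollary self-contained and pleasingly reuses the paper's own covering-map machinery (Theorem~\ref{thm:elementary2} and the remark before Corollary~\ref{cor}), at the cost of the extra work you honestly flag as the main obstacle: verifying that $q_m$ is a local homeomorphism onto a space satisfying the hypotheses of Theorem~\ref{thm:elementary2}, and implicitly that a CEER's topology agrees with the relative topology from $X\times X$ (which follows because compactness of $R\setminus X$ prevents off-diagonal elements from accumulating at the diagonal) --- details that are exactly the content of the cited GPS lemma. The paper's choice buys brevity; yours buys independence from \cite{GPS} and a unified treatment via covering maps. Both arguments, like the paper's, silently read ``inductive limit of CEERs'' as an increasing union of open subrelations with common unit space, so you match the paper's level of rigour on that bookkeeping point as well.
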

\begin{proof}
Let $R$ be an AF-equivalence relation. Then $R = \bigcup_{n=0}^\infty R_n$ is the inductive limit of a sequence of CEERs on a locally compact, Hausdorff, second countable, totally disconnected space $X$. It follows from \cite[Lemma 3.4]{GPS} that $R_n$ is Renault-elementary. Then $R$ is AF.

Conversely, let $G$ be an AF groupoid with unit space $X$. Then $G$ is the inductive limit of sequence of groupoids $G_n$, and there exists a surjective local homeomorphism $\sigma_n: X \rightarrow Y_n$ such that $G_n \cong R(\sigma_n)$. We can assume, without loss of generality, that this sequence of groupoids is infinite. Write $X = \bigcup_{n=1}^\infty X_n$ as an increasing union of compact open subsets. Then
\begin{equation*}
G 
= \bigcup_{n=1}^\infty G_n
= \bigcup_{n=1}^\infty G_n \vert_{X_n}
= \bigcup_{n=1}^\infty (G_n \vert_{X_n} \sqcup (X \setminus X_n)).
\end{equation*}
Note that the right-hand side of the equation above is an increasing union of \'etale equivalence relations. Furthermore,
\begin{equation*}
(G_n \vert_{X_n} \sqcup (X \setminus X_n)) \setminus X_n = G_n\vert_{X_n} \cong R(\sigma_n)\vert_{X_n}
\end{equation*}
is compact. Therefore each $(G_n \vert_{X_n} \sqcup (X \setminus X_n))$ is a CEER, and $G$ is an  AF-equivalence relation.
\end{proof}

\bibliographystyle{plain}
\bibliography{equivalence-AF-groupoid}{}
\end{document}